\DeclareMathOperator{\Con}{Con}
\newtheorem{theorem}{Theorem}
\newtheorem{example}[theorem]{Example}
\newtheorem{corollary}[theorem]{Corollary}
\title{Varieties corresponding to classes of complemented posets}
\author{Ivan~Chajda, Miroslav~Kola\v r\'ik and Helmut~L\"anger}
\date{}
\begin{document}
\footnotetext[1]{Support of the research of the first and third author by \"OAD, project CZ~02/2019, and support of the research of the first author by IGA, project P\v rF~2019~015, is gratefully acknowledged.}
\maketitle
\begin{abstract}
As algebraic semantics of the logic of quantum mechanics there are usually used orthomodular posets, i.e.\ bounded posets with a complementation which is an antitone involution and where the join of orthogonal elements exists and the orthomodular law is satisfied. When we omit the condition that the complementation is an antitone involution, then we obtain skew-orthomodular posets. To each such poset we can assign a bounded $\lambda$-lattice in a non-unique way. Bounded $\lambda$-lattices are lattice-like algebras whose operations are not necessarily associative. We prove that any of the following properties for bounded posets with a unary operation can be characterized by certain identities of an arbitrary assigned $\lambda$-lattice: complementarity, orthogonality, almost skew-orthomodularity and skew-orthomodularity. It is shown that these identities are independent. Finally, we show that the variety of skew-orthomodular $\lambda$-lattices is congruence permutable as well as congruence regular.
\end{abstract}

{\bf AMS Subject Classification:} 06A11, 06B75, 06C15, 03G12, 08B10

{\bf Keywords:} Bounded poset, complemented poset, orthogonal poset, skew-orthomodular poset, $\lambda$-lattice, variety, congruence distributive, congruence regular

It is well-known that an algebraic semantics of the logic of quantum mechanics is provided by means of orthomodular lattices as shown by G.~Birkhoff and J.~von~Neumann (\cite{BV}) or, independently, by K.~Husimi (\cite H). The details of this construction can be found e.g.\ in the monograph by L.~Beran (\cite B). However, it was shown later that in the logic of quantum mechanics the connective disjunction represented by the lattice operation $\vee$ need not exist for elements that are not orthogonal. Hence the concept of an orthomodular poset was introduced as follows:

A bounded {\em poset} $\mathbf P=(P,\leq,{}',0,1)$ with a unary operation is called {\em orthomodular} (see e.g.\ \cite{BH}) if $'$ is an antitone involution on $(P,\leq)$ which is a complementation, i.e.\ $x\leq y$ implies $y'\leq x'$, $x''=x$, $\sup(x,x')$ exists for all $x\in P$ and it is equal to $1$, and $\inf(x,x')$ exists for all $x\in P$ and it is equal to $0$; moreover, $\sup(x,y)$ must exist in case $x\leq y'$; finally, for all $x,y\in P$ with $x\leq y$ there exists $x\vee(x'\wedge y)$ and it is equal to $y$. Due to De Morgan's laws, also dually, $y\wedge(y'\vee x)$ exists for all $x,y\in P$ with $x\leq y$ and it is equal to $x$. The last property is called the {\em orthomodular law} and can be expressed in the case of lattices alternatively in the form of the equivalent identities
\begin{align*}
  x\vee(x'\wedge(x\vee y)) & \approx x\vee y, \\
y\wedge(y'\vee(x\wedge y)) & \approx x\wedge y.
\end{align*}
It was shown by V.~Sn\'a\v sel (\cite S) that every bounded poset can be organized into a so-called bounded $\lambda$-lattice. Bounded $\lambda$-lattices can be considered as bounded lattices whose binary operations are not necessarily associative. More precisely, a bounded $\lambda$-lattice is a bounded lattice if and only if its binary operations are associative, see \cite{CL11} for details. The notion of a $\lambda$-lattice was successfully used by the first two authors for constructing a variety of $\lambda$-lattices which corresponds to the class of orthomodular posets. This variety turns out to be congruence permutable and congruence regular. Of course, it is of advantage to work with varieties of algebras instead of classes of posets since for varieties the well-known methods of Universal Algebra can be applied.

Back to the logic of quantum mechanics, we take as an appropriate structure for the algebraic semantics complemented posets in which the join of two orthogonal elements exists and which satisfy the orthomodular law. We do not ask this complementation to be an antitone involution. The concept of complemented lattices which satisfy the orthomodular law, but whose complementation need not be an antitone involution was introduced in \cite{BC} and studied in \cite{CL18b}. This concept was generalized by the first and third author to posets in \cite{CL18a}. In the present paper we show that similarly to the case of orthomodular posets, for the posets described above the method of considering assigned $\lambda$-lattices can be successfully applied. In fact, it turns out that important properties of certain posets can be characterized by identities of assigned $\lambda$-lattices.

Let $\mathbf P=(P,\leq,{}',0,1)$ be a bounded poset with a unary operation and $a,b\in P$. We define
\begin{align*}
L(a,b) & :=\{x\in P\mid x\leq a,b\}, \\
U(a,b) & :=\{x\in P\mid a,b\leq x\}.
\end{align*}
If there exists $\sup(a,b)$ or $\inf(a,b)$ then we will denote these elements by $a\vee b$ or $a\wedge b$, respectively.

We call $\mathbf P$ {\em complemented} if it satisfies the identities $x\vee x'\approx1$ and $x\wedge x'\approx0$. In this case the operation $'$ is called a {\em complementation}. We say that $a,b$ are {\em orthogonal elements} of $P$, shortly $a\perp b$, if $a\leq b'$. We call a {\em complemented poset} $\mathbf P$ {\em orthogonal} if $x\vee y$ exists for arbitrary orthogonal elements $x,y$ of $P$. We call an orthogonal poset $\mathbf P$ {\em almost skew-orthomodular} if $x\vee(x'\wedge y)$ exists for all $x,y\in P$ with $x\leq y$. We call an almost skew-orthomodular poset $\mathbf P$ {\em skew-orthomodular} if $x\vee(x'\wedge y)=y$ for all $x,y\in P$ with $x\leq y$.

\begin{example}
The poset shown in Fig.~1

\vspace*{-4mm}

\[
\setlength{\unitlength}{7mm}
\begin{picture}(12,9)
\put(6,2){\circle*{.3}}
\put(1,4){\circle*{.3}}
\put(3,4){\circle*{.3}}
\put(5,4){\circle*{.3}}
\put(7,4){\circle*{.3}}
\put(9,4){\circle*{.3}}
\put(11,4){\circle*{.3}}
\put(3,6){\circle*{.3}}
\put(5,6){\circle*{.3}}
\put(7,6){\circle*{.3}}
\put(9,6){\circle*{.3}}
\put(6,8){\circle*{.3}}
\put(6,2){\line(-5,2)5}
\put(6,2){\line(-3,2)3}
\put(6,2){\line(-1,2)1}
\put(6,2){\line(1,2)1}
\put(6,2){\line(3,2)3}
\put(6,2){\line(5,2)5}
\put(1,4){\line(1,1)2}
\put(1,4){\line(2,1)4}
\put(3,4){\line(0,1)2}
\put(3,4){\line(1,1)2}
\put(5,4){\line(-1,1)2}
\put(5,4){\line(0,1)2}
\put(5,4){\line(1,1)2}
\put(7,4){\line(-2,1)4}
\put(7,4){\line(1,1)2}
\put(9,4){\line(-2,1)4}
\put(9,4){\line(0,1)2}
\put(11,4){\line(-2,1)4}
\put(11,4){\line(-1,1)2}
\put(6,8){\line(-3,-2)3}
\put(6,8){\line(-1,-2)1}
\put(6,8){\line(1,-2)1}
\put(6,8){\line(3,-2)3}
\put(5.875,1.25){$0$}
\put(.35,3.85){$a$}
\put(2.35,3.85){$b$}
\put(4.35,3.85){$c$}
\put(7.3,3.85){$d$}
\put(9.3,3.85){$e$}
\put(11.3,3.85){$f$}
\put(2.35,5.85){$g$}
\put(4.35,5.85){$h$}
\put(7.3,5.85){$i$}
\put(9.3,5.85){$j$}
\put(5.85,8.4){$1$}
\put(5.2,.3){{\rm Fig.~1}}
\end{picture}
\]

\vspace*{-3mm}

with
\[
\begin{array}{c|cccccccccccc}
x  & 0 & a & b & c & d & e & f & g & h & i & j & 1 \\
\hline
x' & 1 & j & j & j & i & i & h & f & f & e & c & 0
\end{array}
\]
belongs to ${\mathcal Ps}$, but is not a lattice. Moreover, $'$ is antitone, but not an involution.
\end{example}

In the following let ${\mathcal Pc}$, ${\mathcal Po}$, ${\mathcal Pa}$ and ${\mathcal Ps}$ denote the class of all complemented, orthogonal, almost skew-orthomodular and skew-orthomodular posets, respectively. We are going to show that these classes do not coincide, i.e.\ the inclusions are proper.

\begin{theorem}
We have
\[
{\mathcal Ps}\subsetneqq{\mathcal Pa}\subsetneqq{\mathcal Po}\subsetneqq{\mathcal Pc}.
\]
\end{theorem}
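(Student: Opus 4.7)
The four inclusions $\mathcal{Ps}\subseteq\mathcal{Pa}\subseteq\mathcal{Po}\subseteq\mathcal{Pc}$ are immediate from the definitions, since each of the four classes is obtained from the preceding one by imposing an additional axiom: an orthogonal poset is a complemented poset in which joins of orthogonal pairs exist; an almost skew-orthomodular poset is an orthogonal poset in which the terms $x\vee(x'\wedge y)$ exist for all $x\leq y$; and a skew-orthomodular poset is an almost skew-orthomodular poset in which these terms actually evaluate to $y$. So the plan reduces to exhibiting one explicit counterexample for each of the three strict inclusions.

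For $\mathcal{Po}\subsetneq\mathcal{Pc}$ I would construct a small bounded poset $\mathbf P$ with unary operation $'$ for which $x\vee x'=1$ and $x\wedge x'=0$ hold but for which there is an orthogonal pair $a\perp b$ (i.e.\ $a\leq b'$) whose supremum fails to exist in $\mathbf P$. A natural recipe is to take a small complemented poset built over an antichain of length $4$ with $0,1$ adjoined and some further atoms/coatoms inserted so that, for some pair $a\leq b'$, the set $U(a,b)$ has two incomparable minimal elements. Defining the complementation carefully on each element and verifying that $x\vee x'=1$, $x\wedge x'=0$ hold globally is routine once the Hasse diagram is drawn.

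For $\mathcal{Pa}\subsetneqq\mathcal{Po}$ I would modify such a configuration so that joins of all orthogonal pairs exist, but for some pair $x\leq y$ the element $x'\wedge y$ exists while $U(x,x'\wedge y)$ still has more than one minimal element; this produces a poset in $\mathcal{Po}\setminus\mathcal{Pa}$. For $\mathcal{Ps}\subsetneqq\mathcal{Pa}$ I would exhibit an almost skew-orthomodular poset with a pair $x\leq y$ satisfying $x\vee(x'\wedge y)<y$ strictly; a small horizontal-sum construction based on the benzene ring $N_6$ (which is the classical example of a complemented modular-looking poset that fails orthomodularity) or a suitable bounded poset with a non-involutive complementation should do the job, since in the absence of the antitone-involution assumption the identity $x\vee(x'\wedge y)=y$ can fail while existence of the relevant suprema is preserved.

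The main obstacle is purely constructive: one has to design three finite Hasse diagrams together with explicit definitions of $'$ so that exactly the intended property fails while all weaker properties hold. There is no conceptual difficulty—each verification is a finite check of the existence of certain suprema and infima and of the complementation identities—but care is needed to keep the examples minimal and to confirm, by inspecting $U(\cdot,\cdot)$ and $L(\cdot,\cdot)$ for each relevant pair, that the desired join does or does not exist. Once the three counterexamples are in hand, the three strict inclusions follow, establishing the theorem.
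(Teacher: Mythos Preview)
Your approach is the same as the paper's: the inclusions are definitional and the work lies in three explicit counterexamples. The gap is that you have not actually supplied any of them. What you have written is a description of the \emph{shape} a proof would take---``take a small complemented poset built over an antichain\ldots'', ``modify such a configuration so that\ldots'', ``a small horizontal-sum construction based on the benzene ring $N_6$\ldots\ should do the job''---but the entire content of the theorem is the existence of these three witnesses, and none is exhibited. The paper, by contrast, gives for each strict inclusion a concrete Hasse diagram on eight elements together with a complementation table, and then names the specific pair $x\leq y$ at which the relevant property fails. Until you draw the diagrams, write down the tables for $'$, and verify (i) that all the weaker axioms hold and (ii) that the target axiom fails at a named pair, you have a plan, not a proof.

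Two smaller remarks. First, your recipe for a member of $\mathcal{Po}\setminus\mathcal{Pa}$ assumes that $x'\wedge y$ exists and only the outer join $x\vee(x'\wedge y)$ fails; the simpler mode of failure---and the one the paper uses---is that $x'\wedge y$ itself does not exist for some $x\leq y$. Second, your instinct about $N_6$ is sound: since $N_6$ is a bounded lattice with complementation, it is automatically orthogonal and almost skew-orthomodular, and the classical failure $a\vee(a'\wedge b)=a\vee 0=a\neq b$ for $a<b$ places it in $\mathcal{Pa}\setminus\mathcal{Ps}$. Stating this explicitly, with the diagram and the verification, would close one of the three gaps.
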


\begin{proof}
The poset shown in Fig.~2

\vspace*{-4mm}

\[
\setlength{\unitlength}{7mm}
\begin{picture}(6,9)
\put(3,2){\circle*{.3}}
\put(1,4){\circle*{.3}}
\put(3,4){\circle*{.3}}
\put(5,4){\circle*{.3}}
\put(1,6){\circle*{.3}}
\put(3,6){\circle*{.3}}
\put(5,6){\circle*{.3}}
\put(3,8){\circle*{.3}}
\put(3,2){\line(-1,1)2}
\put(3,2){\line(0,1)6}
\put(3,2){\line(1,1)2}
\put(1,4){\line(0,1)2}
\put(1,4){\line(1,1)2}
\put(3,4){\line(-1,1)2}
\put(5,4){\line(0,1)2}
\put(3,8){\line(-1,-1)2}
\put(3,8){\line(1,-1)2}
\put(2.875,1.25){$0$}
\put(.35,3.85){$a$}
\put(3.3,3.85){$b$}
\put(5.3,3.85){$c$}
\put(.35,5.85){$d$}
\put(3.3,5.85){$e$}
\put(5.3,5.85){$f$}
\put(2.85,8.4){$1$}
\put(2.2,.3){{\rm Fig.~2}}
\end{picture}
\]

\vspace*{-3mm}

with
\[
\begin{array}{c|cccccccc}
x  & 0 & a & b & c & d & e & f & 1 \\
\hline
x' & 1 & c & c & d & f & f & a & 0
\end{array}
\]
is not a lattice and belongs to ${\mathcal Pa}\setminus{\mathcal Ps}$ since $a\leq d$, but $a\vee(a'\wedge d)=a\vee(c\wedge d)=a\vee0=a\neq d$. Moreover, $'$ is neither antitone nor an involution. The poset shown in Fig.~3

\vspace*{-4mm}

\[
\setlength{\unitlength}{7mm}
\begin{picture}(8,9)
\put(4,2){\circle*{.3}}
\put(3,4){\circle*{.3}}
\put(5,4){\circle*{.3}}
\put(7,5){\circle*{.3}}
\put(1,4){\circle*{.3}}
\put(3,6){\circle*{.3}}
\put(5,6){\circle*{.3}}
\put(4,8){\circle*{.3}}
\put(4,2){\line(-3,2)3}
\put(4,2){\line(-1,2)1}
\put(4,2){\line(1,1)3}
\put(4,2){\line(1,2)1}
\put(1,4){\line(1,1)2}
\put(3,4){\line(0,1)2}
\put(3,4){\line(1,1)2}
\put(5,4){\line(-1,1)2}
\put(5,4){\line(0,1)2}
\put(4,8){\line(-1,-2)1}
\put(4,8){\line(1,-2)1}
\put(4,8){\line(1,-1)3}
\put(3.875,1.25){$0$}
\put(.35,3.85){$a$}
\put(2.35,3.85){$b$}
\put(5.3,3.85){$c$}
\put(7.3,4.85){$d$}
\put(2.35,5.85){$e$}
\put(5.3,5.85){$f$}
\put(3.85,8.4){$1$}
\put(3.2,.3){{\rm Fig.~3}}
\end{picture}
\]

\vspace*{-3mm}

with
\[
\begin{array}{c|cccccccc}
x  & 0 & a & b & c & d & e & f & 1 \\
\hline
x' & 1 & f & d & d & a & d & d & 0
\end{array}
\]
belongs to ${\mathcal Po}\setminus{\mathcal Pa}$ since $a\leq e$, but $a'\wedge e=f\wedge e$ does not exist. Moreover, $'$ is neither antitone nor an involution. The poset shown in Fig.~4

\vspace*{-4mm}

\[
\setlength{\unitlength}{7mm}
\begin{picture}(8,9)
\put(4,2){\circle*{.3}}
\put(3,4){\circle*{.3}}
\put(5,4){\circle*{.3}}
\put(7,5){\circle*{.3}}
\put(1,6){\circle*{.3}}
\put(3,6){\circle*{.3}}
\put(5,6){\circle*{.3}}
\put(4,8){\circle*{.3}}
\put(4,2){\line(-1,2)1}
\put(4,2){\line(1,1)3}
\put(4,2){\line(1,2)1}
\put(3,4){\line(-1,1)2}
\put(3,4){\line(0,1)2}
\put(3,4){\line(1,1)2}
\put(5,4){\line(-1,1)2}
\put(5,4){\line(0,1)2}
\put(4,8){\line(-3,-2)3}
\put(4,8){\line(-1,-2)1}
\put(4,8){\line(1,-2)1}
\put(4,8){\line(1,-1)3}
\put(3.875,1.25){$0$}
\put(2.35,3.85){$a$}
\put(5.3,3.85){$b$}
\put(7.3,4.85){$c$}
\put(.35,5.85){$d$}
\put(2.35,5.85){$e$}
\put(5.3,5.85){$f$}
\put(3.85,8.4){$1$}
\put(3.2,.3){{\rm Fig.~4}}
\end{picture}
\]

\vspace*{-3mm}

with
\[
\begin{array}{c|cccccccc}
x  & 0 & a & b & c & d & e & f & 1 \\
\hline
x' & 1 & c & d & a & c & c & c & 0
\end{array}
\]
belongs to ${\mathcal Pc}\setminus{\mathcal Po}$ since $a\leq d=b'$, but $a\vee b$ does not exist. Moreover, $'$ is neither antitone nor an involution.
\end{proof}

Now we introduce the concept of a bounded $\lambda$-lattice taken from \cite S.

A {\em bounded $\lambda$-lattice} is an algebra $(L,\sqcup,\sqcap,0,1)$ of type $(2,2,0,0)$ satisfying the identities
\begin{align*}
                   x\sqcup y & \approx y\sqcup x,\;x\sqcap y\approx y\sqcap x, \\
x\sqcup((x\sqcup y)\sqcup z) & \approx(x\sqcup y)\sqcup z,\;x\sqcap((x\sqcap y)\sqcap z)\approx(x\sqcap y)\sqcap z, \\
          x\sqcup(x\sqcap y) & \approx x,\;x\sqcap(x\sqcup y)\approx x, \\
          					x\sqcup0 & \approx x,\;x\sqcup1\approx1.
\end{align*}
Hence the class of bounded $\lambda$-lattices forms a variety. Notice that every bounded $\lambda$-lattice satisfies the identities
\[
x\sqcap0\approx0\text{ and }x\sqcap1\approx x.
\]
Recall from \cite{CL11} that every variety of bounded $\lambda$-lattices is congruence distributive. It is well-known that in every bounded $\lambda$-lattice $x\sqcup y=y$ is equivalent to $x\sqcap y=x$.

Let $\mathbf P=(P,\leq,{}',0,1)$ be a bounded poset with a unary operation. We introduce binary operations $\sqcup$ and $\sqcap$ on $P$ as follows ($x,y\in P$): If $x\vee y$ exists then $x\sqcup y:=x\vee y$. Otherwise $x\sqcup y=y\sqcup x$ is an arbitrary element of $U(x,y)$. If $x\wedge y$ exists then $x\sqcap y:=x\wedge y$. Otherwise $x\sqcap y=y\sqcap x$ is an arbitrary element of $L(x,y)$. Then $(P,\sqcup,\sqcap,{}',0,1)$ is a bounded $\lambda$-lattice with a unary operation which we call {\em $\lambda$-lattice assigned} to the bounded poset $\mathbf P$. Let $\mathbb A(\mathbf P)$ denote the set of all $\lambda$-lattices assigned to $\mathbf P$.

To every bounded $\lambda$-lattice $\mathbf L=(L,\sqcup,\sqcap,{}',0,1)$ with a unary operation we assign a bounded poset $\mathbb P(\mathbf L)=(L,\leq,{}',0,1)$ as follows:
\[
x\leq y\text{ if and only if }x\sqcup y=y
\]
($x,y\in L$). It was shown in \cite S that $(L,\leq,0,1)$ is a bounded poset and
\[
x\leq y\text{ if and only if }x\sqcap y=x.
\]
Moreover, using the absorption laws, we easily derive the identities
\[
x\sqcup x\approx x\text{ and }x\sqcap x\approx x.
\]
For $i\in\{c,o,a,s\}$ let ${\mathcal Li}$ denote the class of all bounded $\lambda$-lattices $\mathbf L$ with a unary operation satisfying $\mathbb P(\mathbf L)\in{\mathcal Pi}$. Hence ${\mathcal Li}$ can be considered as a representation of ${\mathcal Pi}$. This means that the properties of ${\mathcal Li}$ may be considered as properties of ${\mathcal Pi}$.

In the following we will characterize the above mentioned properties of bounded posets with a unary operation by means of identities of assigned $\lambda$-lattices. Surprisingly, this works despite the fact that this assignment is not unique. Hence, classes of complemented, orthogonal, almost skew-orthomodular and skew-orthomodular posets will be characterized by means of varieties of bounded $\lambda$-lattices. We start with complemented posets.

\begin{theorem}
Let $\mathbf P=(P,\leq,{}',0,1)$ be a bounded poset with a unary operation and $\mathbf L=(P,\sqcup,\sqcap,{}',0,1)\in\mathbb A(\mathbf P)$. Then $\mathbf P$ is complemented if and only if $\mathbf L$ satisfies the identities
\begin{eqnarray}
(x\sqcup y)\sqcup(x'\sqcup y) & \approx & 1,\label{equ1} \\
(x\sqcap y)\sqcap(x'\sqcap y) & \approx & 0.\label{equ2}
\end{eqnarray}
Hence ${\mathcal Lc}$ is a variety.
\end{theorem}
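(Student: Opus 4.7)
The plan is to prove both directions by exploiting the defining property of the assigned operations: regardless of whether the actual join (meet) exists in $\mathbf P$, the element $x\sqcup y$ always belongs to $U(x,y)$ and dually $x\sqcap y\in L(x,y)$. This one-sided monotonicity is enough to translate between properties of $\mathbf P$ and identities of $\mathbf L$, and conveniently it does not depend on the non-canonical choices made when $x\vee y$ or $x\wedge y$ fails to exist.

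Assume first that $\mathbf P$ is complemented. Then $(x\sqcup y)\sqcup(x'\sqcup y)$ lies in $U(x\sqcup y,x'\sqcup y)\subseteq U(x,x')$, so it is an upper bound of $\{x,x'\}$; since $x\vee x'=1$, the only upper bound of $\{x,x'\}$ is $1$, so (\ref{equ1}) holds. Dually, $(x\sqcap y)\sqcap(x'\sqcap y)\in L(x,x')$, and $x\wedge x'=0$ forces this lower bound to equal $0$, giving (\ref{equ2}).

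For the converse, suppose $\mathbf L$ satisfies (\ref{equ1}) and (\ref{equ2}). To show $x\vee x'=1$ in $\mathbf P$, pick any $u\in U(x,x')$ and substitute $y:=u$ in (\ref{equ1}). Since $x\leq u$ and $x'\leq u$, the joins $x\vee u$ and $x'\vee u$ both exist in $\mathbf P$ and equal $u$, hence by the definition of $\sqcup$ we get $x\sqcup u=x'\sqcup u=u$. Using the $\lambda$-lattice identity $u\sqcup u=u$, the left-hand side of (\ref{equ1}) collapses to $u$, so $u=1$. Thus $1$ is the unique upper bound of $\{x,x'\}$ in $\mathbf P$, so $\sup(x,x')=1$. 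The proof that $x\wedge x'=0$ is completely dual, starting from an arbitrary $v\in L(x,x')$ and substituting $y:=v$ in (\ref{equ2}), using $v\sqcap v=v$ to conclude $v=0$.

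Finally, ${\mathcal Lc}$ is by definition the class of those bounded $\lambda$-lattices with a unary operation that satisfy (\ref{equ1}) and (\ref{equ2}); since these are first-order identities in the signature $(\sqcup,\sqcap,{}',0,1)$, the class is a subvariety of the variety of bounded $\lambda$-lattices with a unary operation. No step is genuinely hard; the only subtlety worth flagging is that in the converse direction we must choose $y$ to be an arbitrary upper (resp.\ lower) bound of $\{x,x'\}$, which is precisely what turns the \emph{a priori} ambiguous operations $\sqcup,\sqcap$ into the true join/meet at exactly the places where the identity is evaluated.
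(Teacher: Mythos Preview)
Your proof is correct and follows essentially the same approach as the paper's: in the forward direction both of you observe that $(x\sqcup y)\sqcup(x'\sqcup y)$ is an upper bound of $\{x,x'\}$ and hence must be $1$, and in the converse both substitute an arbitrary upper bound of $\{x,x'\}$ for $y$ and use idempotency to collapse the expression. One small wording issue: ${\mathcal Lc}$ is \emph{defined} as the class of bounded $\lambda$-lattices $\mathbf L$ with $\mathbb P(\mathbf L)\in{\mathcal Pc}$, not as the class satisfying (\ref{equ1})--(\ref{equ2}); that these coincide is precisely what the theorem shows (and your argument, which only uses that $x\sqcup y\in U(x,y)$ and that $x\leq u$ implies $x\sqcup u=u$, in fact applies to every bounded $\lambda$-lattice, not only those in some $\mathbb A(\mathbf P)$, so the conclusion that ${\mathcal Lc}$ is a variety is justified).
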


\begin{proof}
Let $a,b\in P$. First assume $\mathbf P\in{\mathcal Pc}$. Then
\begin{align*}
 a & \leq a\sqcup b\leq(a\sqcup b)\sqcup(a'\sqcup b), \\
a' & \leq a'\sqcup b\leq(a\sqcup b)\sqcup(a'\sqcup b)
\end{align*}
and hence $(a\sqcup b)\sqcup(a'\sqcup b)\in U(a,a')=\{1\}$, i.e., $(a\sqcup b)\sqcup(a'\sqcup b)=1$. Dually,
\begin{align*}
(a\sqcap b)\sqcap(a'\sqcap b) & \leq a\sqcap b\leq a, \\
(a\sqcap b)\sqcap(a'\sqcap b) & \leq a'\sqcap b\leq a'
\end{align*}
and hence $(a\sqcap b)\sqcap(a'\sqcap b)\in L(a,a')=\{0\}$, i.e., $(a\sqcap b)\sqcap(a'\sqcap b)=0$. Conversely, suppose $\mathbf L$ to satisfy identities (\ref{equ1}) and (\ref{equ2}). If $a,a'\leq b$ then $a\sqcup b=a'\sqcup b=b$ and hence
\[
b=b\sqcup b=(a\sqcup b)\sqcup(a'\sqcup b)=1
\]
showing $a\vee a'=1$. Similarly, $b\leq a,a'$ implies $a\sqcap b=a'\sqcap b=b$ and therefore
\[
b=b\sqcap b=(a\sqcap b)\sqcap(a'\sqcap b)=0
\]
showing $a\wedge a'=0$. Hence $\mathbf P\in{\mathcal Pc}$.
\end{proof}

The identities
\begin{align*}
x\sqcup x' & \approx1, \\
x\sqcap x' & \approx0.
\end{align*}
are necessary, but not sufficient for a bounded $\lambda$-lattice to be complemented.

Orthogonal posets can be characterized by an identity that is a bit more complicated than the previous ones (\ref{equ1}) and (\ref{equ2}).

\begin{theorem}\label{th2}
Let $\mathbf P=(P,\leq,{}',0,1)\in{\mathcal Pc}$ and $\mathbf L=(P,\sqcup,\sqcap,{}',0,1)\in\mathbb A(\mathbf P)$. Then $\mathbf P$ is orthogonal if and only if $\mathbf L$ satisfies the identity
\begin{equation}
(((x\sqcap y')\sqcup z)\sqcup(y\sqcup z))\sqcap((x\sqcap y')\sqcup y)\approx(x\sqcap y')\sqcup y.\label{equ3}
\end{equation}
Hence ${\mathcal Lo}$ is a variety.
\end{theorem}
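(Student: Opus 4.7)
The plan is as follows. Note that in every $\lambda$-lattice assigned to $\mathbf P$ the inequality $a\sqcap b'\leq b'$ always holds (either because $a\sqcap b'=a\wedge b'$, or because $a\sqcap b'\in L(a,b')$), so $a\sqcap b'$ is \emph{orthogonal} to $b$. This is the reason identity (\ref{equ3}) sees orthogonality at all; the extra variable $z$ will serve, in the converse direction, as a slot into which one can plug an arbitrary common upper bound of the pair $\{a,b\}$. I will use freely that, in any such $\lambda$-lattice, $x\sqcap y=x\wedge y$ and $x\sqcup y=x\vee y$ whenever those bounds exist in $\mathbf P$, that $x,y\leq x\sqcup y$ and $x\sqcap y\leq x,y$ hold unconditionally, and that $u\leq v$ is equivalent to both $u\sqcup v=v$ and $u\sqcap v=u$.

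For the forward implication, assume $\mathbf P\in\mathcal{Po}$ and fix $a,b,c\in P$. Since $a\sqcap b'\leq b'$, the pair $(a\sqcap b',b)$ is orthogonal, so $(a\sqcap b')\vee b$ exists in $\mathbf P$ and coincides with $(a\sqcap b')\sqcup b$. From $a\sqcap b'\leq(a\sqcap b')\sqcup c$ and $b\leq b\sqcup c$ it follows that $((a\sqcap b')\sqcup c)\sqcup(b\sqcup c)$ lies in $U(a\sqcap b',b)$ and hence dominates the join $(a\sqcap b')\vee b$; meeting that element with this join now yields (\ref{equ3}).

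Conversely, suppose $\mathbf L$ satisfies (\ref{equ3}) and let $a,b\in P$ with $a\leq b'$. Then $a\wedge b'=a$ exists, so $a\sqcap b'=a$, and substituting $x:=a$, $y:=b$ reduces (\ref{equ3}) to
\[
((a\sqcup z)\sqcup(b\sqcup z))\sqcap(a\sqcup b)\approx a\sqcup b.
\]
I claim $a\sqcup b$ is the join of $a$ and $b$ in $\mathbf P$. It is certainly an upper bound; conversely, for any $c\in U(a,b)$ the specialization $z:=c$ gives $a\sqcup c=b\sqcup c=c$, so the displayed identity collapses to $c\sqcap(a\sqcup b)=a\sqcup b$, i.e.\ $a\sqcup b\leq c$. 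Thus $a\vee b=a\sqcup b$ exists, $\mathbf P\in\mathcal{Po}$, and $\mathcal{Lo}$ is carved out of $\mathcal{Lc}$ by the single identity (\ref{equ3}); combined with the previous theorem this makes $\mathcal{Lo}$ a variety. The only step requiring any real insight is recognizing the dual roles of $x\sqcap y'$ (a parametric name for an arbitrary element orthogonal to $y$) and of $z$ (a parametric slot for an arbitrary upper bound of $\{x,y\}$); once this is seen, both directions are short unfoldings of the definitions.
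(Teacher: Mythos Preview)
Your proof is correct and follows essentially the same route as the paper's: in both directions the key observation is that $a\sqcap b'\perp b$ so that $(a\sqcap b')\vee b$ exists when $\mathbf P\in\mathcal{Po}$, and conversely that substituting $a\perp b$ forces $a\sqcap b'=a$, after which plugging an arbitrary upper bound $c$ of $\{a,b\}$ into the slot $z$ collapses the identity to $a\sqcup b\leq c$. The only cosmetic difference is that the paper writes out the intermediate chain $(a\sqcup c)\sqcup c=c\sqcup c=c$ explicitly, while you summarize it.
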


\begin{proof}
Let $a,b,c\in P$. First assume $\mathbf P\in{\mathcal Po}$. Then $(a\sqcap b')\vee b$ exists. Now
\begin{align*}
a\sqcap b' & \leq (a\sqcap b')\sqcup c\leq((a\sqcap b')\sqcup c)\sqcup(b\sqcup c), \\
         b & \leq b\sqcup c\leq((a\sqcap b')\sqcup c)\sqcup(b\sqcup c)
\end{align*}
and hence $((a\sqcap b')\sqcup c)\sqcup(b\sqcup c)\in U(a\sqcap b',b)$ which yields
\[
(a\sqcap b')\sqcup b\leq((a\sqcap b')\sqcup c)\sqcup(b\sqcup c)
\]
which is equivalent to identity (\ref{equ3}). Conversely, suppose $\mathbf L$ to satisfy identity (\ref{equ3}). Assume $a\perp b$. Then $a\sqcap b'=a$. If $a,b\leq c$ then $a\sqcup c=b\sqcup c=c$ and hence
\begin{align*}
a\sqcup b & =(a\sqcap b')\sqcup b=(((a\sqcap b')\sqcup c)\sqcup(b\sqcup c))\sqcap((a\sqcap b')\sqcup b)\leq((a\sqcap b')\sqcup c)\sqcup(b\sqcup c)= \\
          & =(a\sqcup c)\sqcup c=c\sqcup c=c
\end{align*}
showing $a\sqcup b=a\vee b$, i.e.\ $a\vee b$ exists. Hence $\mathbf P\in{\mathcal Po}$.
\end{proof}

Similarly as above we can characterize almost skew-orthomodular posets.

\begin{theorem}\label{th1}
Let $\mathbf P=(P,\leq,{}',0,1)\in{\mathcal Po}$ and $\mathbf L=(P,\sqcup,\sqcap,{}',0,1)\in\mathbb A(\mathbf P)$. Then $\mathbf P$ is almost skew-orthomodular if and only if $\mathbf L$ satisfies the identity
\begin{equation}
(((x\sqcap y)'\sqcap z)\sqcap(y\sqcap z))\sqcup((x\sqcap y)'\sqcap y)\approx(x\sqcap y)'\sqcap y.\label{equ4}
\end{equation}
Hence ${\mathcal La}$ is a variety.
\end{theorem}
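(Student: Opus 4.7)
The plan is to mirror the argument for Theorem~\ref{th2}, but with the roles of $\sqcup$ and $\sqcap$ (upper and lower bounds) interchanged. Recall that in any bounded $\lambda$-lattice the relation $u\sqcup v=v$ is equivalent to $u\leq v$, so identity (\ref{equ4}) is equivalent to the inequality
\[
((x\sqcap y)'\sqcap z)\sqcap(y\sqcap z)\leq(x\sqcap y)'\sqcap y
\]
holding for all $x,y,z\in P$.

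For the forward direction I would assume $\mathbf P\in\mathcal{Pa}$ and fix $a,b,c\in P$. Because $a\sqcap b\leq b$, almost skew-orthomodularity applied to the pair $a\sqcap b\leq b$ guarantees that $(a\sqcap b)\vee((a\sqcap b)'\wedge b)$ exists, and in particular the meet $(a\sqcap b)'\wedge b$ exists and equals $(a\sqcap b)'\sqcap b$. The element $((a\sqcap b)'\sqcap c)\sqcap(b\sqcap c)$ is manifestly a lower bound of both $(a\sqcap b)'$ (via $\leq (a\sqcap b)'\sqcap c\leq(a\sqcap b)'$) and $b$ (via $\leq b\sqcap c\leq b$), hence $\leq(a\sqcap b)'\sqcap b$; translating back through $u\sqcup v=v\Leftrightarrow u\leq v$ gives identity~(\ref{equ4}).

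For the backward direction I would assume (\ref{equ4}) and take $a\leq b$ in $P$, so that $a\sqcap b=a$ and $(a\sqcap b)'=a'$. Specialising (\ref{equ4}) to an arbitrary $c\in L(a',b)$ makes both $a'\sqcap c$ and $b\sqcap c$ collapse to $c$, and the identity forces $c\leq a'\sqcap b$. Since $a'\sqcap b$ itself lies in $L(a',b)$, it is the greatest element there, so $a'\wedge b$ exists and equals $a'\sqcap b$. From $a'\wedge b\leq a'$ we read off $(a'\wedge b)\perp a$, and the standing hypothesis $\mathbf P\in\mathcal{Po}$ then supplies the join $a\vee(a'\wedge b)$, proving $\mathbf P\in\mathcal{Pa}$.

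The subtle point is the final step: the identity alone only produces the meet $a'\wedge b$, and the existence of the join $a\vee(a'\wedge b)$ has to be imported from the orthogonality of $\mathbf P$—which is precisely why the theorem is formulated under the running hypothesis $\mathbf P\in\mathcal{Po}$. The closing claim that $\mathcal{La}$ is a variety is then immediate, since $\mathcal{La}$ is defined within the variety of bounded $\lambda$-lattices with a unary operation by the identities (\ref{equ1})--(\ref{equ4}).
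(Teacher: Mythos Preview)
Your proof is correct and follows the paper's argument closely. The one point of divergence is the final step of the backward direction: you observe that $a'\wedge b\leq a'$ gives $(a'\wedge b)\perp a$ and then invoke the semantic definition of orthogonality of $\mathbf P$ to obtain the join $a\vee(a'\wedge b)$, whereas the paper stays on the $\lambda$-lattice side and applies identity~(\ref{equ3}) (available since $\mathbf P\in\mathcal{Po}$ and hence $\mathbf L\in\mathcal{Lo}$ by Theorem~\ref{th2}) with $x=b$, $y=a$ to show directly that $a\sqcup(a'\sqcap b)$ is below every upper bound of $\{a,a'\sqcap b\}$. Your route is a touch shorter and more conceptual; the paper's is purely computational within the $\lambda$-lattice identities.
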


\begin{proof}
Let $a,b,c\in P$. First assume $\mathbf P\in{\mathcal Pa}$. Then $(a\sqcap b)'\wedge b$ exists. Now
\begin{align*}
((a\sqcap b)'\sqcap c)\sqcap(b\sqcap c) & \leq(a\sqcap b)'\sqcap c\leq(a\sqcap b)', \\
((a\sqcap b)'\sqcap c)\sqcap(b\sqcap c) & \leq b\sqcap c\leq b
\end{align*}
and hence $((a\sqcap b)'\sqcap c)\sqcap(b\sqcap c)\in L((a\sqcap b)',b)$ which yields
\[
((a\sqcap b)'\sqcap c)\sqcap(b\sqcap c)\leq(a\sqcap b)'\sqcap b
\]
which is equivalent to identity (\ref{equ4}). Conversely, suppose $\mathbf L$ to satisfy identity (\ref{equ4}). Assume $a\leq b$. Then $a\sqcap b=a$. If $c\leq a',b$ then $a'\sqcap c=b\sqcap c=c$ and hence
\begin{align*}
c & =c\sqcap c=(a'\sqcap c)\sqcap c=((a\sqcap b)'\sqcap c)\sqcap(b\sqcap c)\leq \\
  & \leq(((a\sqcap b)'\sqcap c)\sqcap(b\sqcap c))\sqcup((a\sqcap b)'\sqcap b)=(a\sqcap b)'\sqcap b=a'\sqcap b
\end{align*}
showing $a'\sqcap b=a'\wedge b$, i.e.\ $a'\wedge b$ exists. Moreover, $a\sqcup b=b$. If $a,a'\sqcap b\leq c$ then $a\sqcup c=(a'\sqcap b)\sqcup c=c$ and hence
\begin{align*}
a\sqcup(a'\sqcap b) & =(b\sqcap a')\sqcup a=(((b\sqcap a')\sqcup c)\sqcup(a\sqcup c))\sqcap((b\sqcap a')\sqcup a)\leq \\
                    & \leq((b\sqcap a')\sqcup c)\sqcup(a\sqcup c)=((a'\sqcap b)\sqcup c)\sqcup c=c\sqcup c=c
\end{align*}
showing $a\sqcup(a'\sqcap b)=a\vee(a'\wedge b)$, i.e.\ $a\vee(a'\wedge b)$ exists. Hence $\mathbf P\in{\mathcal Pa}$.
\end{proof}

\begin{example}
The poset shown in Fig.~5

\vspace*{-4mm}

\[
\setlength{\unitlength}{7mm}
\begin{picture}(8,9)
\put(3,2){\circle*{.3}}
\put(1,4){\circle*{.3}}
\put(3,4){\circle*{.3}}
\put(5,4){\circle*{.3}}
\put(7,4){\circle*{.3}}
\put(1,6){\circle*{.3}}
\put(3,6){\circle*{.3}}
\put(5,6){\circle*{.3}}
\put(3,8){\circle*{.3}}
\put(3,2){\line(-1,1)2}
\put(3,2){\line(0,1)6}
\put(3,2){\line(1,1)2}
\put(3,2){\line(2,1)4}
\put(1,4){\line(0,1)2}
\put(1,4){\line(1,1)2}
\put(3,4){\line(-1,1)2}
\put(5,4){\line(-2,1)4}
\put(5,4){\line(0,1)2}
\put(7,4){\line(-2,1)4}
\put(3,8){\line(-1,-1)2}
\put(3,8){\line(1,-1)4}
\put(2.875,1.25){$0$}
\put(.35,3.85){$a$}
\put(3.3,3.85){$b$}
\put(5.3,3.85){$c$}
\put(7.3,3.85){$d$}
\put(.35,5.85){$e$}
\put(3.3,5.85){$f$}
\put(5.3,5.85){$g$}
\put(2.85,8.4){$1$}
\put(2.2,.3){{\rm Fig.~5}}
\end{picture}
\]

\vspace*{-3mm}

with
\[
\begin{array}{c|ccccccccc}
x  & 0 & a & b & c & d & e & f & g & 1 \\
\hline
x' & 1 & g & g & 1 & 1 & g & g & 1 & g
\end{array}
\]
satisfies identity {\rm(\ref{equ4})}, but does not belong to ${\mathcal Pc}$ since $c\wedge c'=c\wedge1=c\neq0$, and is not a lattice. Moreover, $'$ is antitone, but not an involution.
\end{example}

Next we characterize skew-orthomodular posets by identities of assigned $\lambda$-lattices. Since in almost skew-orthomodular $\lambda$-lattices we have
\[
x\sqcup(x'\sqcap(x\sqcup y))=x\vee(x'\wedge(x\vee y)),
\]
we only need to add a single identity. Let us note that the poset shown in Fig.~1 is almost skew-orthomodular, but not skew-orthomodular. For skew-orthomodularity we have the following result. The proof is evident.

\begin{corollary}
Let $\mathbf P=(P,\leq,{}',0,1)$ be a bounded poset with a unary operation and $\mathbf L=(P,\sqcup,\sqcap,{}',0,1)\in\mathbb A(\mathbf P)$. Then $\mathbf P$ is skew-orthomodular if and only if $\mathbf L$ satisfies the identities {\rm(\ref{equ1})} -- {\rm(\ref{equ5})} where
\begin{equation}
x\sqcup(x'\sqcap(x\sqcup y))\approx x\sqcup y.\label{equ5}
\end{equation}
Hence ${\mathcal Ls}$ is a variety.
\end{corollary}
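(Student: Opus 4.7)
The plan is to leverage the three preceding characterisation theorems (for $\mathcal{Pc}$, $\mathcal{Po}$, $\mathcal{Pa}$) to reduce the claim to a single remaining equation, namely identity~(\ref{equ5}), which should encode the orthomodular equality $x\vee(x'\wedge y)=y$ for $x\leq y$. Since $\mathcal{Ps}\subsetneq\mathcal{Pa}\subsetneq\mathcal{Po}\subsetneq\mathcal{Pc}$, every skew-orthomodular poset automatically forces (\ref{equ1})--(\ref{equ4}) to hold in any assigned $\lambda$-lattice, and conversely these four identities already force $\mathbf P$ into $\mathcal{Pa}$; hence the only new content carried by the corollary sits in identity~(\ref{equ5}).

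\textbf{Forward direction.} Assume $\mathbf P\in\mathcal{Ps}$. By Theorem~1, $\mathbf P$ is complemented, orthogonal and almost skew-orthomodular, so the complementarity theorem together with Theorems~\ref{th2} and~\ref{th1} yield (\ref{equ1})--(\ref{equ4}). To verify (\ref{equ5}), fix $a,b\in P$ and put $c:=a\sqcup b$; by construction of the assignment $a\leq c$. Almost skew-orthomodularity applied to the comparable pair $a\leq c$ guarantees that both $a'\wedge c$ and $a\vee(a'\wedge c)$ exist, hence $a'\sqcap c=a'\wedge c$ and $a\sqcup(a'\sqcap c)=a\vee(a'\wedge c)$. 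Skew-orthomodularity now gives $a\vee(a'\wedge c)=c=a\sqcup b$, which is precisely identity~(\ref{equ5}).

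\textbf{Converse direction.} Suppose $\mathbf L$ satisfies (\ref{equ1})--(\ref{equ5}). The first four identities yield $\mathbf P\in\mathcal{Pa}$ by the earlier theorems, so only the orthomodular equality remains. For any $a\leq b$ in $P$ we have $a\sqcup b=b$; almost skew-orthomodularity supplies the existence of $a'\wedge b$ and of $a\vee(a'\wedge b)$, and therefore $a'\sqcap(a\sqcup b)=a'\sqcap b=a'\wedge b$ and $a\sqcup(a'\wedge b)=a\vee(a'\wedge b)$. Instantiating (\ref{equ5}) with $x=a$, $y=b$ gives $a\vee(a'\wedge b)=a\sqcup(a'\sqcap b)=a\sqcup b=b$, so $\mathbf P\in\mathcal{Ps}$.

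The only subtle step is the repeated translation of the $\lambda$-lattice operations $\sqcup,\sqcap$ inside the nested term $x\sqcup(x'\sqcap(x\sqcup y))$ into the genuine poset operations $\vee,\wedge$; this is exactly the author's remark that in almost skew-orthomodular $\lambda$-lattices one has $x\sqcup(x'\sqcap(x\sqcup y))=x\vee(x'\wedge(x\vee y))$, once one reads the inner $x\vee y$ as the chosen upper bound $c=x\sqcup y$. With that translation recorded, the equivalence between the orthomodular equation and identity~(\ref{equ5}) is immediate, justifying the paper's statement that the proof is evident.
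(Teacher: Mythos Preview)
Your proposal is correct and follows exactly the route the paper intends: the paper offers no detailed argument beyond the remark that in almost skew-orthomodular $\lambda$-lattices one has $x\sqcup(x'\sqcap(x\sqcup y))=x\vee(x'\wedge(x\vee y))$ and then declares the proof evident, so your write-up simply makes that evident step explicit by invoking the three preceding theorems for identities (\ref{equ1})--(\ref{equ4}) and translating identity (\ref{equ5}) via the almost skew-orthomodular existence statements.
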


In the following we show some important congruence properties of the variety ${\mathcal Ls}$.

Let $\mathcal V$ be a variety. \\
The variety $\mathcal V$ is called {\em congruence permutable} if $\Theta\circ\Phi=\Phi\circ\Theta$ for all $\mathbf A\in\mathcal V$ and all $\Theta,\Phi\in\Con\mathbf A$. \\
The variety $\mathcal V$ is called {\em congruence regular} if for each $\mathbf A=(A,F)\in\mathcal V$, $a\in A$ and $\Theta,\Phi\in\Con\mathbf A$ with $[a]\Theta=[a]\Phi$ we have $\Theta=\Phi$. \\
It is well-known (cf.\ \cite{CEL}) that $\mathcal V$ is congruence permutable if and only if there exists a so-called {\em Malcev term}, i.e.\ a ternary term $p$ satisfying
\[
p(x,x,y)\approx p(y,x,x)\approx y
\]
and it is regular if and only if there exists a positive integer $n$ and ternary terms $t_1,\ldots,t_n$ such that
\[
t_1(x,y,z)=\cdots=t_n(x,y,z)=z\text{ if and only if }x=y.
\]

\begin{theorem}
Let $\mathcal V$ be a variety of bounded $\lambda$-lattices $(L,\sqcup,\sqcap,{}',0,1)$ with a unary operation satisfying the identities $x\sqcap x'\approx0$ and {\rm(5)}. Then $\mathcal V$ is congruence permutable. In particular, ${\mathcal Ls}$ is congruence permutable.
\end{theorem}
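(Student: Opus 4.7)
The plan is to invoke Mal'cev's theorem: $\mathcal V$ is congruence permutable if and only if its signature admits a ternary term $p(x,y,z)$ satisfying $p(x,x,y)\approx y\approx p(y,x,x)$. So it suffices to exhibit such a term and verify the two identities using only the $\lambda$-lattice axioms, the hypothesis $x\sqcap x'\approx0$, and identity (\ref{equ5}). As a preliminary I would record the derived idempotence $x\sqcup x\approx x$ (and dually $x\sqcap x\approx x$), which follows from the absorption laws together with $x\sqcup0\approx x$ and is in fact already noted in the body of the paper.

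My proposed term is
\[
p(x,y,z):=\bigl(((z\sqcup y)\sqcap y')\sqcup x\bigr)\sqcap\bigl(z\sqcup(y'\sqcap(y\sqcup x))\bigr).
\]
For $p(x,x,y)\approx y$, the left factor becomes $((y\sqcup x)\sqcap x')\sqcup x$, which equals $x\sqcup(x'\sqcap(x\sqcup y))$ after applying commutativity of $\sqcup$ and $\sqcap$, and hence collapses to $x\sqcup y$ by (\ref{equ5}); the right factor becomes $y\sqcup(x'\sqcap(x\sqcup x))=y\sqcup(x\sqcap x')=y\sqcup0=y$ by idempotence and $x\sqcap x'\approx0$; then the outer meet $(x\sqcup y)\sqcap y$ reduces to $y$ by absorption. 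For $p(y,x,x)\approx y$ the roles of the two factors swap: the left factor becomes $((x\sqcup x)\sqcap x')\sqcup y=(x\sqcap x')\sqcup y=y$ by idempotence and $x\sqcap x'\approx0$, the right factor becomes $x\sqcup(x'\sqcap(x\sqcup y))=x\sqcup y$ by (\ref{equ5}), and absorption again delivers $y\sqcap(x\sqcup y)=y$. The ``in particular'' statement is then immediate: ${\mathcal Ls}$ satisfies (\ref{equ5}) by the preceding Corollary and satisfies $x\sqcap x'\approx0$ by instantiating (\ref{equ2}) at $y=1$ and using $x\sqcap1\approx x$.

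The hardest part is discovering the correct shape of $p$. The naive Boolean-style term $(x\sqcap y')\sqcup(y'\sqcap z)\sqcup(x\sqcap z)$ already fails in orthomodular lattices, because on substitution $y\leftarrow x$ it leaves $(x'\sqcap y)\sqcup(x\sqcap y)$, and reducing this to $y$ is a distributivity phenomenon unavailable here. The remedy is to arrange $p$ as a meet of two carefully parenthesised factors so that, on each of the two Mal'cev tests, one factor collapses to $y$ directly via $x\sqcap x'\approx0$ while the other telescopes to $x\sqcup y$ via (\ref{equ5}); the outer absorption $y\sqcap(x\sqcup y)\approx y$ then finishes the job. Non-associativity of the $\lambda$-lattice operations forbids free re-bracketing, so every invocation of (\ref{equ5}) or of an absorption law has to match its axiom in the exact parenthesization used, and this is what forces the slightly asymmetric form of $p$.
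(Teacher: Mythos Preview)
Your proof is correct and follows essentially the same approach as the paper: after applying commutativity of $\sqcup$ and $\sqcap$, your term $\bigl(((z\sqcup y)\sqcap y')\sqcup x\bigr)\sqcap\bigl(z\sqcup(y'\sqcap(y\sqcup x))\bigr)$ is exactly the paper's Mal'cev term $(x\sqcup(y'\sqcap(y\sqcup z)))\sqcap(z\sqcup(y'\sqcap(y\sqcup x)))$, and your verification of the two Mal'cev identities is the same computation. The apparent asymmetry you remark on is thus only notational; in fact the term is symmetric under swapping $x$ and $z$ once commutativity is taken into account.
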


\begin{proof}
The term
\[
p(x,y,z):=(x\sqcup(y'\sqcap(y\sqcup z)))\sqcap(z\sqcup(y'\sqcap(y\sqcup x)))
\]
is a Malcev term since
\begin{align*}
p(x,x,z) & \approx(x\sqcup(x'\sqcap(x\sqcup z)))\sqcap(z\sqcup(x'\sqcap(x\sqcup x)))\approx(x\sqcup z)\sqcap(z\sqcup(x'\sqcap x))\approx \\
         & \approx(x\sqcup z)\sqcap(z\sqcup0)\approx(x\sqcup z)\sqcap z\approx z, \\
p(x,z,z) & \approx(x\sqcup(z'\sqcap(z\sqcup z)))\sqcap(z\sqcup(z'\sqcap(z\sqcup x)))\approx(x\sqcup(z'\sqcap z))\sqcap(z\sqcup x)\approx \\
         & \approx(x\sqcup0)\sqcap(z\sqcup x)\approx x\sqcap(z\sqcup x)\approx x.
\end{align*}
\end{proof}

We are going to show also congruence regularity of the variety ${\mathcal Ls}$.

\begin{theorem}
Let $\mathcal V$ be a variety of bounded $\lambda$-lattices $(L,\sqcup,\sqcap,{}',0,1)$ with a unary operation satisfying the identities $0'\approx1$, $x\sqcap x'\approx0$ and {\rm(5)}. Then $\mathcal V$ is congruence regular. In particular, the variety ${\mathcal Ls}$ is congruence regular.
\end{theorem}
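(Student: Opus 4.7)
The plan is to apply the ternary-term characterization of congruence regularity recalled just before the theorem and to exhibit two suitable terms. I would set
\[
d(x,y) := (x'\sqcap(x\sqcup y))\sqcup(y'\sqcap(y\sqcup x))
\]
and define
\[
t_1(x,y,z) := d(x,y)\sqcup z,\qquad t_2(x,y,z) := d(x,y)'\sqcap z.
\]
The shape of $d$ is chosen so that identity (\ref{equ5}) turns the vanishing of its two summands into $a\sqcup b=a$ and $b\sqcup a=b$, while the pair $(t_1,t_2)$ is designed so that $t_1(a,b,c)=c=t_2(a,b,c)$ forces $d(a,b)=0$ independently of $c$.

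First I would verify the two identities $t_1(x,x,z)\approx z$ and $t_2(x,x,z)\approx z$. Using $x\sqcap x'\approx 0$ and idempotency of $\sqcup$, one gets $d(x,x)=0\sqcup 0=0$, so $t_1(x,x,z)=0\sqcup z\approx z$. Applying $0'\approx 1$ together with the derived identity $1\sqcap z\approx z$ also gives $t_2(x,x,z)=0'\sqcap z\approx z$. This direction is routine but uses all three distinguished hypotheses on $\mathcal V$.

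For the nontrivial converse I would fix $a,b,c$ with $t_1(a,b,c)=c=t_2(a,b,c)$. From $d(a,b)\sqcup c=c$ I read off $d(a,b)\leq c$, and from $d(a,b)'\sqcap c=c$ (equivalently $c\sqcap d(a,b)'=c$) that $c\leq d(a,b)'$. Transitivity of the induced order gives $d(a,b)\leq d(a,b)'$, and then $x\sqcap x'\approx 0$ collapses $d(a,b)=d(a,b)\sqcap d(a,b)'=0$. Since absorption yields $u\sqcap(u\sqcup v)=u$ and $u\sqcap 0\approx 0$, the vanishing of $d(a,b)$ propagates to both of its summands, so $a'\sqcap(a\sqcup b)=0$ and $b'\sqcap(b\sqcup a)=0$. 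Identity (\ref{equ5}) then gives $a\sqcup b=a\sqcup(a'\sqcap(a\sqcup b))=a\sqcup 0=a$, whence $b\leq a$; symmetrically $a\leq b$, so $a=b$.

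The step I expect to be the main obstacle is settling on this pair of terms. In a bounded $\lambda$-lattice neither $\sqcup$ nor $\sqcap$ need be monotone or associative, so a single ``symmetric difference'' style term such as $(x\sqcap y')\sqcup(x'\sqcap y)$ alone cannot turn a hypothesis of the form $\mathrm{term}(a,b,c)=c$ into the unconditional equality $a=b$. The point of the complementary pair $(d\sqcup z,\,d'\sqcap z)$ is that together they sandwich $d(a,b)$ between $c$ and $d(a,b)'$, so $x\sqcap x'\approx 0$ alone forces the collapse $d(a,b)=0$ without any monotonicity input, after which (\ref{equ5}), applied symmetrically in $a$ and $b$, finishes the argument.
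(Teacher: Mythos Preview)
Your proof is correct and is essentially the paper's own argument: your term $d(x,y)$ coincides (by commutativity of $\sqcup$) with the paper's $t(x,y)=(x'\sqcap(x\sqcup y))\sqcup(y'\sqcap(x\sqcup y))$, and your $t_1,t_2$ are exactly the paper's terms. Your justification that $d(a,b)=0$ forces both summands to vanish (via absorption and $u\sqcap 0\approx 0$) even spells out a step the paper leaves implicit.
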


\begin{proof}
Put
\[
t(x,y):=(x'\sqcap(x\sqcup y))\sqcup(y'\sqcap(x\sqcup y)).
\]
Then
\[
t(x,x)\approx(x'\sqcap(x\sqcup x))\sqcup(x'\sqcap(x\sqcup x))\approx(x'\sqcap x)\sqcup(x'\sqcap x)\approx0\sqcup0\approx0,
\]
and if $t(x,y)=0$ then $x'\sqcap(x\sqcup y)=y'\sqcap(x\sqcup y)=0$ and hence
\[
x=x\sqcup0=x\sqcup(x'\sqcap(x\sqcup y))=x\sqcup y=y\sqcup x=y\sqcup(y'\sqcap(x\sqcup y))=y\sqcup0=y.
\]
If we put
\begin{align*}
t_1(x,y,z) & :=t(x,y)\sqcup z, \\
t_2(x,y,z) & :=(t(x,y))'\sqcap z
\end{align*}
then
\begin{align*}
t_1(x,x,z) & \approx t(x,x)\sqcup z\approx0\sqcup z\approx z, \\
t_2(x,x,z) & \approx(t(x,x))'\sqcap z\approx0'\sqcap z\approx1\sqcap z\approx z,
\end{align*}
and if $t_1(x,y,z)=t_2(x,y,z)=z$ then $t(x,y)\leq z\leq(t(x,y))'$ and hence $t(x,y)=t(x,y)\sqcap(t(x,y))'=0$ whence $x=y$. (Observe that in ${\mathcal Lc}$ we have $0'\approx0\sqcup0'\approx1$.)
\end{proof}

Finally, we are going to show the independence of identities (1) -- (4).

\begin{theorem}
\
\begin{enumerate}[{\rm(i)}]
\item Identities {\rm(1)} -- {\rm(4)} are independent,
\item identities {\rm(1)} -- {\rm(4)} do not imply identity {\rm(5)}.
\end{enumerate}
\end{theorem}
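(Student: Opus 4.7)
The plan is to establish both parts by exhibiting explicit finite counterexamples, one per claim, and verifying the relevant identities by finite case analysis.

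For (ii), reuse the poset $\mathbf P$ of Fig.~2 from the proof of Theorem~1, which lies in $\mathcal{Pa}\setminus\mathcal{Ps}$. By Theorems~2 and~\ref{th2} and Theorem~\ref{th1}, every $\lambda$-lattice in $\mathbb A(\mathbf P)$ satisfies identities (1)--(4). However (5) fails on the very witness used to show $\mathbf P\notin\mathcal{Ps}$: with $x:=a$ and $y:=d$ one has $a\le d$, so $a\sqcup d=d$, whereas $a\sqcup(a'\sqcap(a\sqcup d))=a\sqcup(c\sqcap d)=a\sqcup0=a\neq d$.

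For (i) I need four witness algebras, one for each identity. The independence of (3) and (4) is supplied by the posets of Fig.~4 and Fig.~3 respectively. Fig.~4 is in $\mathcal{Pc}\setminus\mathcal{Po}$, so by Theorem~2 every assigned $\lambda$-lattice automatically satisfies (1) and (2); then a suitable choice of the arbitrary value $a\sqcup b$ in the assignment at the orthogonal pair $a\le b'$ breaks (3), while one verifies that the same assignment can be completed so that (4) remains valid. Symmetrically, Fig.~3 is in $\mathcal{Po}\setminus\mathcal{Pa}$, so (1), (2), (3) hold automatically by Theorems~2 and~\ref{th2}, and the witness $a\le e$ with $a'\wedge e=f\wedge e$ not existing yields an assignment breaking (4).

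The independence of (1) and (2) has no direct poset-level origin, so I instead construct small bounded $\lambda$-lattices by hand. Take the three-element chain $0<c<1$ with its ordinary lattice meet and join. Because $(L,\sqcup,\sqcap,0,1)$ is a genuine bounded lattice, an easy check using associativity and absorption shows that the ternary identities (3) and (4) hold automatically: each reduces to $A\sqcap B=A$ with $A\le B$ or its dual. Define $0':=c':=1$ and $1':=0$; then (1) holds (each substitution contains either $x'$ or $y'$ equal to $1$ somewhere, forcing the left side to $1$), but (2) fails at $x=y=c$ since $(c\sqcap c)\sqcap(c'\sqcap c)=c\sqcap c=c\neq 0$. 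Dually, defining $0':=c':=1':=0$ makes (2) hold while (1) fails at $x=c$, $y=0$: $(c\sqcup 0)\sqcup(c'\sqcup 0)=c\neq 1$. These chain algebras separate (1) and (2) from the remaining identities.

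The main obstacle is the joint verification for the Fig.~3 and Fig.~4 cases, where one must fix the arbitrary values of $\sqcup$ and $\sqcap$ at pairs lacking supremum or infimum so as simultaneously to break the targeted identity and preserve the others. This is a finite but delicate enumeration, because (3) and (4) are ternary identities with many substitution instances; the strategy is to fix only those arbitrary values that are actually forced by the failing witness and then verify the remaining identities by a straightforward (and, for posets of size at most ten, short) case check.
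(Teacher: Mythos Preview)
Your approach is correct but genuinely different from the paper's. The paper constructs four fresh $\lambda$-lattices (an $8$-element one for (3), a $9$-element one for (4), a $5$-element one for (ii), and two-element chains for (1) and (2)) and simply asserts, after displaying the failing instance, that the remaining identities hold. You instead recycle the posets of Figs.~2--4 and let Theorems~3--5 do the work: since Fig.~2 lies in $\mathcal{Pa}$, every assigned $\lambda$-lattice automatically satisfies (1)--(4), giving (ii) immediately; since Fig.~3 lies in $\mathcal{Po}$, (1)--(3) are free and only the failure of (4) needs a witness; since Fig.~4 lies in $\mathcal{Pc}$, (1)--(2) are free. This is more economical and ties the independence result directly to the hierarchy $\mathcal{Ps}\subsetneqq\mathcal{Pa}\subsetneqq\mathcal{Po}\subsetneqq\mathcal{Pc}$, which is conceptually nice. (Your three-element chains for (1) and (2) work; the paper's two-element chains are marginally simpler.)

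The one place your write-up is genuinely incomplete is the Fig.~4 case. You assert that ``one verifies that the same assignment can be completed so that (4) remains valid,'' but this is the only non-automatic step and you do not supply the argument. The clean way to close it: in Fig.~4 the range of~$'$ is $\{0,1,a,c,d\}$, and each of these elements has a well-defined infimum with \emph{every} element of the poset (for $d$ note that $L(d,y)$ is always one of $\{0\}$, $\{0,a\}$, or $\{0,a,d\}$). Hence $(x\sqcap y)'\wedge y$ always exists, so $(x\sqcap y)'\sqcap y$ equals this infimum, and then $((x\sqcap y)'\sqcap z)\sqcap(y\sqcap z)\in L((x\sqcap y)',y)$ forces (4) for \emph{any} assignment. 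Once you add this observation, your proof is complete (and in fact shows that every $\lambda$-lattice assigned to Fig.~4 satisfies (1), (2), (4) and violates (3), without needing to tune the assignment).
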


\begin{proof}
\
\begin{enumerate}[(i)]
\item The $\lambda$-lattice $(\{0,1\},\sqcup,\sqcap,0,1)$ with
\[
\begin{array}{c|cc}
x  & 0 & 1 \\
\hline
x' & 0 & 0
\end{array}
\]
satisfies (2), (3) and (4), but not (1) since
\[
(0\sqcup0)\sqcup(0'\sqcup0)=0\sqcup(0\sqcup0)=0\neq1.
\]
The $\lambda$-lattice $(\{0,1\},\sqcup,\sqcap,0,1)$ with
\[
\begin{array}{c|cc}
x  & 0 & 1 \\
\hline
x' & 1 & 1
\end{array}
\]
satisfies (1), (3) and (4), but not (2) since
\[
(1\sqcap1)\sqcap(1'\sqcap1)=1\sqcap(1\sqcap1)=1\neq0.
\]
The $\lambda$-lattice shown in Fig.~6

\vspace*{-8mm}

\[
\setlength{\unitlength}{7mm}
\begin{picture}(6,11)
\put(3,2){\circle*{.3}}
\put(1,4){\circle*{.3}}
\put(3,4){\circle*{.3}}
\put(1,6){\circle*{.3}}
\put(5,6){\circle*{.3}}
\put(1,8){\circle*{.3}}
\put(3,8){\circle*{.3}}
\put(3,10){\circle*{.3}}
\put(3,2){\line(-1,1)2}
\put(3,2){\line(0,1)8}
\put(3,2){\line(1,2)2}
\put(1,4){\line(0,1)4}
\put(1,6){\line(1,1)2}
\put(3,10){\line(-1,-1)2}
\put(3,10){\line(1,-2)2}
\put(2.875,1.25){$0$}
\put(.35,3.85){$a$}
\put(3.3,3.85){$b$}
\put(.35,5.85){$c$}
\put(5.3,5.85){$d$}
\put(.35,7.85){$e$}
\put(3.3,7.85){$f$}
\put(2.85,10.4){$1$}
\put(2.2,.3){{\rm Fig.~6}}
\end{picture}
\]

\vspace*{-3mm}

with $a\sqcup b=1$, $b\sqcup c=f$, $e\sqcap f=c$ and
\[
\begin{array}{c|cccccccc}
x  & 0 & a & b & c & d & e & f & 1 \\
\hline
x' & 1 & d & e & d & a & b & d & 0
\end{array}
\]
satisfies (1), (2) and (4), but not (3) since
\begin{align*}
(((a\sqcap b')\sqcup f)\sqcup(b\sqcup f))\sqcap((a\sqcap b')\sqcup b) & =(((a\sqcap e)\sqcup f)\sqcup f)\sqcap((a\sqcap e)\sqcup b)= \\
                                                                      & =((a\sqcup f)\sqcup f)\sqcap(a\sqcup b)=(f\sqcup f)\sqcap1= \\
																																			& =f\neq1=a\sqcup b=(a\sqcap e)\sqcup b= \\
																																			& =(a\sqcap b')\sqcup b.
\end{align*}																																			
The $\lambda$-lattice shown in Fig.~7

\vspace*{-8mm}

\[
\setlength{\unitlength}{7mm}
\begin{picture}(8,9)
\put(4,2){\circle*{.3}}
\put(1,4){\circle*{.3}}
\put(3,4){\circle*{.3}}
\put(5,4){\circle*{.3}}
\put(7,4){\circle*{.3}}
\put(2,6){\circle*{.3}}
\put(4,6){\circle*{.3}}
\put(6,6){\circle*{.3}}
\put(4,8){\circle*{.3}}
\put(4,2){\line(-3,2)3}
\put(4,2){\line(-1,2)2}
\put(4,2){\line(1,2)2}
\put(4,2){\line(3,2)3}
\put(1,4){\line(1,2)1}
\put(1,4){\line(3,2)3}
\put(3,4){\line(3,2)3}
\put(5,4){\line(-3,2)3}
\put(7,4){\line(-3,2)3}
\put(7,4){\line(-1,2)1}
\put(4,8){\line(-1,-1)2}
\put(4,8){\line(0,-1)2}
\put(4,8){\line(1,-1)2}
\put(3.875,1.25){$0$}
\put(.35,3.85){$a$}
\put(2.35,3.85){$b$}
\put(5.3,3.85){$c$}
\put(7.3,3.85){$d$}
\put(1.35,5.85){$e$}
\put(4.3,5.85){$f$}
\put(6.3,5.85){$g$}
\put(3.85,8.4){$1$}
\put(3.2,.3){{\rm Fig.~7}}
\end{picture}
\]

\vspace*{-3mm}

with $a\sqcup b=e$, $a\sqcup c=e$, $a\sqcup d=f$, $b\sqcup c=1$, $b\sqcup d=g$, $c\sqcup d=g$, $e\sqcap f=a$, $e\sqcap g=c$, $f\sqcap g=d$ and
\[
\begin{array}{c|ccccccccc}
x  & 0 & a & b & c & d & e & f & g & 1 \\
\hline
x' & 1 & g & f & f & e & d & b & a & 0
\end{array}
\]
satisfies (1), (2) and (3), but not (4) since
\begin{align*}
(((d\sqcap g)'\sqcap b)\sqcap(g\sqcap b))\sqcup((d\sqcap g)'\sqcap g) & =((d'\sqcap b)\sqcap b)\sqcup(d'\sqcap g)= \\
                                                                      & =((e\sqcap b)\sqcap b)\sqcup(e\sqcap g)=(b\sqcap b)\sqcup c= \\
																																			& =b\sqcup c=1\neq c=e\sqcap g=d'\sqcap g= \\
																																			& =(d\sqcap g)'\sqcap g.
\end{align*}
\item The $\lambda$-lattice shown in Fig.~8

\vspace*{-8mm}

\[
\setlength{\unitlength}{7mm}
\begin{picture}(6,9)
\put(3,2){\circle*{.3}}
\put(1,4){\circle*{.3}}
\put(5,5){\circle*{.3}}
\put(1,6){\circle*{.3}}
\put(3,8){\circle*{.3}}
\put(3,2){\line(-1,1)2}
\put(3,2){\line(2,3)2}
\put(1,4){\line(0,1)2}
\put(3,8){\line(-1,-1)2}
\put(3,8){\line(2,-3)2}
\put(2.875,1.25){$0$}
\put(.35,3.85){$a$}
\put(5.3,4.85){$b$}
\put(.35,5.85){$c$}
\put(2.85,8.4){$1$}
\put(2.2,.3){{\rm Fig.~8}}
\end{picture}
\]

\vspace*{-3mm}

with
\[
\begin{array}{c|ccccc}
x  & 0 & a & b & c & 1 \\
\hline
x' & 1 & b & a & b & 0
\end{array}
\]
satisfies (1) -- (4), but not (5) since
\[
a\sqcup(a'\sqcap(a\sqcup c))=a\sqcup(b\sqcap c)=a\sqcup0=a\neq c=a\sqcup c.
\]
\end{enumerate}
\end{proof}

Authors' addresses:

Ivan Chajda \\
Palack\'y University Olomouc \\
Faculty of Science \\
Department of Algebra and Geometry \\
17.\ listopadu 12 \\
771 46 Olomouc \\
Czech Republic \\
ivan.chajda@upol.cz

Miroslav Kola\v r\'ik \\
Palack\'y University Olomouc \\
Faculty of Science \\
Department of Computer Science \\
17.\ listopadu 12 \\
771 46 Olomouc \\
Czech Republic \\
miroslav.kolarik@upol.cz

Helmut L\"anger \\
TU Wien \\
Faculty of Mathematics and Geoinformation \\
Institute of Discrete Mathematics and Geometry \\
Wiedner Hauptstra\ss e 8-10 \\
1040 Vienna \\
Austria, and \\
Palack\'y University Olomouc \\
Faculty of Science \\
Department of Algebra and Geometry \\
17.\ listopadu 12 \\
771 46 Olomouc \\
Czech Republic \\
helmut.laenger@tuwien.ac.at
\end{document}